\newtheorem{theorem}{Theorem}
\newtheorem{lemma}[theorem]{Lemma}
\newtheorem{problem}{Problem}
\def\NN{{\mathbb N}}
\def\ss{\boldsymbol s}
\def\dd{\;{\rm d}}
\def\EF{Ehrenfeucht-Fra\"\i{}ss\'e}
\def\opsi{{\widehat{\psi}}}
\begin{document}
\title{First order convergence and roots\thanks{This work was done during a visit to the Institut Mittag-Leffler (Djursholm, Sweden).}}
\author{Demetres Christofides\thanks{School of Sciences, UCLan Cyprus, 7080 Pyla, Cyprus. E-mail: {\tt dchristofides@uclan.ac.uk}.}\and
        Daniel Kr\'al'\thanks{Mathematics Institute, DIMAP and Department of Computer Science, University of Warwick, Coventry CV4 7AL, UK. E-mail: {\tt d.kral@warwick.ac.uk}. The work leading to this invention has received funding from the European Research Council under the European Union's Seventh Framework Programme (FP7/2007-2013)/ERC grant agreement no.~259385.}
	}
\date{}
\maketitle
\begin{abstract}
Ne\v set\v ril and Ossona de Mendez introduced the notion of first order convergence,
which unifies the notions of convergence for sparse and dense graphs. They asked
whether if $(G_i)_{i\in\NN}$ is a sequence of graphs with $M$ being
their first order limit and $v$ is a vertex of $M$,
then there exists a sequence $(v_i)_{i\in\NN}$ of vertices such that
the graphs $G_i$ rooted at $v_i$ converge to $M$ rooted at $v$.
We show that this holds for almost all vertices $v$ of $M$ and
we give an example showing that the statement need not hold for all vertices.
\end{abstract}

\section{Introduction}
\label{sec-intro}

The theory of limits of combinatorial objects keeps attracting more and more attention and
its applications in various areas such as extremal combinatorics, computer science and many others grow.
The most understood is the case of dense graph convergence which originated in the series of papers
by Borgs, Chayes, Lov\'asz, S\'os, Szegedy and Vesztergombi~\cite{bib-borgs08+,bib-borgs+,bib-borgs06+,bib-lovasz06+,bib-lovasz10+}.
This development is also reflected in a recent monograph by Lov\'asz~\cite{bib-lovasz-book}.
Another line of research concentrated around the convergence of sparse graphs (such as those
with bounded maximum degree) known as the Benjamini-Schramm convergence~\cite{bib-aldous07+,bib-benjamini01+,bib-elek07,bib-hatami+}.
Ne\v set\v ril and Ossona de Mendez~\cite{bib-folim1,bib-folim2} proposed a notion of
first order convergence to unify the two notions for the dense and sparse settings.

First order convergence is a notion of convergence for all relational structures.
For simplicity, we limit our exposition to graphs and rooted graphs only
but all our arguments extend to the general setting naturally.
If $\psi$ is a first order formula with $k$ free variables and $G$ is a finite graph,
then the {\em Stone pairing} $\langle \psi,G\rangle$ is the probability that
a uniformly chosen $k$-tuple of vertices of $G$ satisfies $\psi$.
A sequence $(G_n)_{n\in\NN}$ of graphs is called {\em first order convergent}
if the limit $\lim\limits_{n\to\infty}\langle \psi,G_n\rangle$ exists for every first order formula $\psi$.
A {\em modeling} $M$ is a (finite or infinite) graph whose vertex set is equipped with a probability measure
such that the set of all $k$-tuples of vertices of $M$ satisfying a formula $\psi$ is measurable
in the product measure for every first order formula $\psi$ with $k$ free variables.
In the analogy to the graph case, the {\em Stone pairing} $\langle \psi,M\rangle$
is the probability that a randomly chosen $k$-tuple of vertices satisfies $\psi$.
If a finite graph is viewed as a modeling with a uniform discrete probability measure on its vertex set,
then the stone pairings for the graph and the modeling obtained in this way coincide.

A modeling $M$ is a limit of a first order convergent sequence $(G_n)_{n\in\NN}$
if $$\lim_{n\to\infty}\langle \psi,G_n\rangle=\langle\psi,M\rangle$$
for every first order formula $\psi$.
It is not true that every first order convergent sequence of graphs has a limit modeling~\cite{bib-folim2}
but it can be shown, e.g., that first order convergent sequences of trees do~\cite{bib-folim-trees,bib-folim3}.

Ne\v set\v ril and Ossona de Mendez~\cite[Problem 1]{bib-folim2} raised the following problem,
which we formulate here for graphs only.

\begin{problem}
\label{prob-1}
Let $M$ be a modeling that is a limit of a first order convergent sequence $(G_n)_{n\in\NN}$ and
let $v$ be a vertex of $M$. Does there exist a sequence $(v_n)_{n\in\NN}$ of vertices of the graphs $(G_n)_{n\in\NN}$
such that the modeling $M$ rooted at $v$ is a limit of the sequence $(G'_n)_{n\in\NN}$
where $G'_n$ is obtained from $G_n$ by rooting at $v_n$?
\end{problem}

We prove that the statement from Problem~\ref{prob-1} is true for almost every vertex $v$ of $M$.

\begin{theorem}
\label{thm-main}
Let $M$ be a modeling that is a limit of a first order convergent sequence $(G_n)_{n\in\NN}$.
It holds with probability one that if $M'$ is a modeling obtained from $M$ by rooting at a random vertex $v$ of $M$,
then there exist a sequence $(v_n)_{n\in\NN}$ of vertices of $(G_n)_{n\in\NN}$
such that $M'$ is a limit of the sequence $(G'_n)_{n\in\NN}$
where $G'_n$ is obtained from $G_n$ by rooting at $v_n$.
\end{theorem}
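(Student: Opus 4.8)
Since the vocabulary is a single binary relation there are only countably many first order formulas; list as $\psi_1,\psi_2,\dots$ all formulas $\psi_j(x,\bar y)$ in which one free variable $x$ is distinguished and $\bar y$ is a (possibly empty) tuple of further free variables. Rooted–graph formulas correspond bijectively to these via replacing the root by $x$, so if for a rooted graph $(H,w)$ I write $f_j(H,w):=\Pr_{\bar y}[H\models\psi_j(w,\bar y)]$, then the conclusion ``$(G_n,v_n)$ converges to $(M,v)$'' is exactly the statement that $f_j(G_n,v_n)\to f_j(M,v)$ for every $j$. The plan is to encode vertices by their ``type'' $T_n(u):=(f_j(G_n,u))_j\in[0,1]^{\NN}$, resp.\ $T(v):=(f_j(M,v))_j$, to let $\nu_n$ and $\nu$ be the push-forwards of the vertex measures under $T_n$ and $T$ (these are measurable maps because $M$ is a modeling and by Fubini), and to prove that for $\nu$-almost every point $t$ of the compact metrizable space $[0,1]^{\NN}$ there is a sequence with one entry in $\operatorname{supp}\nu_n$ for each $n$ that converges to $t$; a routine diagonalisation then yields the single sequence $(v_n)_{n\in\NN}$ required by the theorem.

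First I would show $\nu_n\to\nu$ weakly. As $[0,1]^{\NN}$ is compact metrizable it is enough to test against polynomials in finitely many coordinates, hence against monomials $\prod_j f_j^{a_j}$. The key observation is that such a product is again one of our functions: expanding $f_j(H,w)^{a}$ as the probability that $a$ independent random tuples each satisfy $\psi_j(w,\cdot)$ and using disjoint sets of variables for the copies, one gets $\prod_j f_j(H,w)^{a_j}=f_\Psi(H,w)$ for the single formula $\Psi(x,\bar z)=\bigwedge_{j,i}\psi_j(x,\bar z^{(j,i)})$ with all variables distinct (for $M$ this uses that it is a modeling, so that the relevant sets are product-measurable and Fubini applies). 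Averaging over $w$ turns $\int\prod_j f_j^{a_j}\dd\nu_n$ into $\langle\Psi,G_n\rangle$ and $\int\prod_j f_j^{a_j}\dd\nu$ into $\langle\Psi,M\rangle$, and these converge since $M$ is the limit of $(G_n)_{n\in\NN}$.

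The step I expect to be the main obstacle is passing from weak convergence to the almost-everywhere statement, because weak convergence by itself only gives that the $\nu$-measure of the bad set $\{t:\operatorname{dist}(t,\operatorname{supp}\nu_n)\ge\varepsilon\}$ tends to $0$, not that almost every $t$ is eventually good. To fix this I would, for a fixed finite coordinate set $S=\{1,\dots,m\}$ and an $\varepsilon>0$, project to $[0,1]^m$ and partition $[0,1]^m$ into finitely many boxes of diameter $<\varepsilon$ all of whose bounding hyperplanes are $\nu$-null (possible because each one-dimensional marginal of $\nu$ has at most countably many atoms). For a box $Q$ with $\nu(Q)>0$ one has $\nu(Q^{\circ})=\nu(Q)>0$, hence by the open-set form of the portmanteau theorem $\nu_n(Q^{\circ})>0$, and so $Q$ meets $\operatorname{supp}\nu_n$, for all $n$ past some threshold $N_{S,\varepsilon}$ \emph{not depending on the point}; since the positive-measure boxes cover $\nu$-almost all of $[0,1]^m$, this gives that for $\nu$-a.e.\ $t$ and all $n\ge N_{S,\varepsilon}$ the $S$-projection of $t$ is within $\varepsilon$ of that of $\operatorname{supp}\nu_n$.

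Finally I would apply this with $S=\{1,\dots,k\}$ and $\varepsilon=1/k$ for every $k$ and intersect the resulting measure-one sets. For $t$ in the intersection there are thresholds $N_k$, which may be taken with $N_k\to\infty$, such that for $n\ge N_k$ some $u\in V(G_n)$ satisfies $|f_j(G_n,u)-t_j|<1/k$ for all $j\le k$; choosing for each $n$ such a $u=v_n$ for the largest $k$ with $N_k\le n$ makes $f_j(G_n,v_n)\to t_j$ for every fixed $j$, i.e.\ $(G_n,v_n)\to(M,v)$ whenever $t=T(v)$. Since $T$ pushes $\mu_M$ to $\nu$, this occurs for $\mu_M$-almost every $v$, which is precisely the assertion of Theorem~\ref{thm-main}.
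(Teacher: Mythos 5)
Your argument is correct, and it reaches Theorem~\ref{thm-main} by a genuinely different route than the paper. The paper works entirely inside first order logic: for the finite family $F^1_{p,q}$ and each dyadic box $J$ it builds, by a sampling/Chernoff construction (Lemmas~\ref{lm-formula} and~\ref{lm-formulas}), a test formula whose Stone pairing approximately detects whether the rooted Stone-pairing vector lies in $J$; it calls a root negligible when its vector lies in a box whose test formula has vanishing pairing in $M$, proves that the negligible roots form a null set (Lemma~\ref{lm-negligible}), and for a non-negligible root transfers positivity of the test formula's pairing from $M$ to $G_n$ directly from the hypothesis $\langle\cdot,G_n\rangle\to\langle\cdot,M\rangle$, which produces the vertices $v_n$ (Theorem~\ref{thm-roots}). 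You instead push the vertex measures forward under the type map $v\mapsto(f_j(\cdot,v))_j$, prove $\nu_n\Rightarrow\nu$ by the method of moments --- your identity $\prod_j f_j(\cdot,w)^{a_j}=f_\Psi(\cdot,w)$ for a conjunction over disjoint variable blocks is exactly right, by Fubini and the measurability built into the modeling assumption --- and you replace the negligible-set lemma by the portmanteau theorem on boxes with $\nu$-null cutting hyperplanes together with a threshold uniform over the finitely many boxes; this uniformity is indeed the crucial point, and you handle it correctly before diagonalising. Both proofs share the same skeleton (almost every type vector lies in positive-measure boxes at every scale, and this positivity passes to $G_n$); what differs is the transfer mechanism, Chernoff-built test formulas versus weak convergence of type distributions. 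The paper's version needs nothing beyond Fubini and is stated directly for $m$-tuples in Theorem~\ref{thm-roots}, while yours avoids constructing any auxiliary formulas beyond plain conjunctions, makes the role of the distribution of root types explicit, and extends verbatim to $m$-tuples. Two small points to patch: near the boundary of the cube work with boxes relatively open in $[0,1]^m$ (only the interior cut points need to be chosen non-atomic, since the marginals may have atoms at $0$ or $1$, and portmanteau applies to relatively open sets), and record that rooting $M$ at $v$ yields a modeling because sections of product-measurable sets are measurable, so that $f_j(M,v)$ really is the Stone pairing of the rooted modeling $M'$.
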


Theorem~\ref{thm-main} follows from a more general Theorem~\ref{thm-roots}
which we prove in Section~\ref{sec-roots}. In Section~\ref{sec-counter},
we present an example that the statement of Theorem~\ref{thm-main} cannot
be strengthened to all vertices $v$ of $M$, i.e., the answer to Problem~\ref{prob-1}
is negative. This also answers a more general problem \cite[Problem 2]{bib-folim2}
in the negative way.

\section{Notation}
\label{sec-notation}

We assume that the reader is familiar with standard graph theory and logic terminology as
it can be found, e.g., in~\cite{bib-diestel,bib-ebbinghaus+}. We briefly review here
less standard terminology and notation only. Throughout the paper,
we write $[k]$ for the set of positive integers between $1$ and $k$ (inclusively).

There is a close connection between the first order logic and the so-called \EF{} games.
The {\em $p$-round \EF{} game} is played by two players, the spoiler and the duplicator,
on two relational structures. We explain the game when played on two graphs $G$ and $H$.
At the beginning of each round, the spoiler chooses a vertex in any one of the two graphs and
the duplicator responds with choosing a vertex in the other.
One vertex can be chosen several times in different rounds of the game.
Let $v_i$ be the vertex chosen in the $i$-th round in $G$ and $w_i$ the vertex chosen in the $i$-th round in $H$.
The duplicator wins the game if the subgraph of $G$ induced by $v_1,\ldots,v_p$ and
the subgraph of $H$ induced by $w_1,\ldots,w_p$ are isomorphic through the isomorphism
mapping $v_i$ to $w_i$.

It can be shown that the duplicator has a winning strategy for the $p$-round \EF{} game played on $G$ and $H$
if and only if $G$ and $H$ satisfy the same first order sentences with quantifier depth at most $p$.
More generally,
suppose that
$\psi(x_1,\ldots,x_k)$ is a first order formula with $k$ free variables and with quantifier depth $d$,
$G$ and $H$ are two graphs, and $v_1,\ldots,v_k$ and $w_1,\ldots,w_k$ are (not necessarily distinct) vertices of $G$ and $H$, respectively.
If the duplicator has a winning strategy for the $(k+d)$-round \EF{} game when played on $G$ and $H$
with the vertices $v_1,\ldots,v_k$ and $w_1,\ldots,w_k$ played in the first $k$ rounds (so, it remains to play $d$ rounds of the game),
then $G$ satisfies $\psi(v_1,\ldots,v_k)$ if and only if $H$ satisfies $\psi(w_1,\ldots,w_k)$.
This correspondence can be used to show~\cite{bib-ebbinghaus+} that
the set $F^m_{p,q}$ of all non-equivalent first order formulas with $p$ free variables and quantifier depth at most $q$
for $m$-rooted graphs
is finite for all positive integers $m$, $p$ and $q$ (the language for $m$-rooted graphs consists of a single binary
relation representing the adjacency and $m$ constants representing the roots).

\section{Almost every rooting is good}
\label{sec-roots}

In this section, we prove our main result
which provides a positive answer to Problem~\ref{prob-1} in the almost every sense.
As preparation for the proof, we need to establish several technical lemmas.

\begin{lemma}
\label{lm-formula}
Let $\psi$ be a first order formula for $m$-rooted graphs and let $[a,b]\subseteq [0,1]$ be a non-empty interval.
For every $\varepsilon>0$, there exists a first order formula $\psi'$ such that
the following holds for every $m$-rooted modeling $M$:
\begin{itemize}
\item if $\langle\psi,M\rangle\in [a,b]$, then $\langle\psi',M\rangle>1-\varepsilon$, and
\item if $\langle\psi,M\rangle\not\in (a-\varepsilon,b+\varepsilon)$, then $\langle\psi',M\rangle<\varepsilon$.
\end{itemize}
\end{lemma}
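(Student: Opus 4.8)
\section*{Proof proposal}

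The plan is to encode an approximation of the value of the Stone pairing $\langle\psi,M\rangle$ by a single first order formula, using the law of large numbers. Concretely, take many mutually independent copies of the block of free variables of $\psi$ and let the new formula assert, via a threshold count, that the fraction of copies satisfying $\psi$ lies in a slightly enlarged version of $[a,b]$. Since in the product measure the copies are independent, the number of satisfied copies is binomially distributed, and a Chernoff/Hoeffding bound forces it to concentrate around $N\langle\psi,M\rangle$, where $N$ is the number of copies.

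In detail: let $k$ be the number of free variables of $\psi$, put $\delta=\varepsilon/2$, and fix $N$ large enough that $2e^{-2N\delta^2}<\varepsilon$; then, by Hoeffding's inequality, for every $p\in[0,1]$ a $\mathrm{Binomial}(N,p)$ variable $X$ satisfies $\Pr\bigl[\,|X/N-p|\ge\delta\,\bigr]<\varepsilon$. Introduce $Nk$ free variables split into $N$ consecutive blocks of size $k$, and for $j\in[N]$ let $\psi_j$ be $\psi$ with its free variables substituted by the $j$-th block (the $m$ root constants are left unchanged, so each $\psi_j$, and hence $\psi'$, is again a formula for $m$-rooted graphs). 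Define
$$\psi' \;:=\; \Bigl(\textstyle\bigvee_{S\subseteq[N],\,|S|=\lceil (a-\delta)N\rceil}\;\bigwedge_{j\in S}\psi_j\Bigr)\;\wedge\;\Bigl(\textstyle\bigvee_{S\subseteq[N],\,|S|=N-\lfloor (b+\delta)N\rfloor}\;\bigwedge_{j\in S}\neg\psi_j\Bigr),$$
with the obvious convention for degenerate thresholds (a non-positive size requirement makes the corresponding clause $x_1=x_1$; since $[a,b]\subseteq[0,1]$ no clause is ever forced to be unsatisfiable). Thus $\psi'$ expresses that at least $(a-\delta)N$ and at most $(b+\delta)N$ of the $N$ blocks satisfy $\psi$.

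Now fix an $m$-rooted modeling $M$ and set $p=\langle\psi,M\rangle$. Because $\psi'$ is a Boolean combination of the formulas $\psi_j$ on pairwise disjoint blocks of variables, these $N$ events are independent in the product measure and each has probability $p$, so the number $X$ of satisfied blocks is $\mathrm{Binomial}(N,p)$ and $\langle\psi',M\rangle=\Pr[\,a-\delta\le X/N\le b+\delta\,]$. If $p\in[a,b]$, then $|X/N-p|\le\delta$ implies $a-\delta\le X/N\le b+\delta$, whence $\langle\psi',M\rangle\ge\Pr[\,|X/N-p|\le\delta\,]>1-\varepsilon$. If instead $p\ge b+\varepsilon$ then $X/N\le b+\delta=b+\varepsilon/2\le p-\delta$ forces $|X/N-p|\ge\delta$, so $\langle\psi',M\rangle\le\Pr[\,|X/N-p|\ge\delta\,]<\varepsilon$, and the case $p\le a-\varepsilon$ is symmetric.

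I do not expect a serious obstacle here: the whole argument is bookkeeping around one concentration inequality. The only points that need care are choosing the slack $\delta$ small enough (relative to the gap between $[a,b]$ and $(a-\varepsilon,b+\varepsilon)$) so that \emph{both} required inequalities survive, checking that the threshold formula is syntactically well formed and not accidentally vacuous or contradictory when $a-\delta<0$ or $b+\delta>1$, and noting explicitly that it is precisely the independence of the blocks in the product measure that makes $X$ binomial, so that Hoeffding's bound applies.
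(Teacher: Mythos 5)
Your proof is correct and follows essentially the same route as the paper: replicate the block of free variables $N$ times, let $\psi'$ be a threshold (counting) formula asserting that the fraction of blocks satisfying $\psi$ lies in a slightly enlarged $[a,b]$, and apply a Chernoff/Hoeffding bound to the binomial count of satisfied blocks in the product measure. The only cosmetic difference is that you use a fixed slack $\varepsilon/2$ while the paper uses a vanishing slack $n^{2/3}$ out of $n$ copies; both work for the same reason.
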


\begin{proof}
If $\psi$ is a sentence, i.e., it has no free variables, then the statement is trivial.
Suppose that $\psi$ has $k$ free variables.
Let $\psi_n$ be the first order formula with $nk$ free variables grouped in $n$ $k$-tuples such that
$\psi_n$ is true if and only if at least $an-n^{2/3}$ and at most $bn+n^{2/3}$ of these $k$-tuples
do satisfy $\psi$. Formally,
$$\begin{array}{cl}
  &\psi_n(x^1_1,\ldots,x^1_k,\ldots,x^n_1,\ldots,x^n_k)\\
  =&\bigvee\limits_{i=\lceil an-n^{2/3}\rceil}^{\lfloor bn+n^{2/3}\rfloor}
                                                                          \bigvee\limits_{A\in {[n]\choose i}}
	 \left(\bigwedge\limits_{j\in A}\psi(x^j_1,\ldots,x^j_k)\land\bigwedge\limits_{j\not\in A}\neg\psi(x^j_1,\ldots,x^j_k)\right)							\;\mbox{.}
	\end{array}$$
The Chernoff bound implies that the formula $\psi'$ can be chosen to be the formula $\psi_n$ for $n$ sufficiently large.
\end{proof}

An interval is a {\em dyadic interval of order $k\in\NN$} if it is of the form $[a2^{-k},(a+1)2^{-k}]$ for some integer $a$.
A point $x$ is {\em $\varepsilon$-far} from an interval $J$ if $|x-y|\ge\varepsilon$ for every $y\in J$.
Otherwise, we say that $x$ is {\em $\varepsilon$-close} to $J$.
A {\em multidimensional interval} is a subset of $[0,1]^d$ that is the product of $d$ intervals;
if $J$ is a multidimensional interval, then $J_i$ denotes the $i$-th term in the product.
A multidimensional interval $J$ is {\em dyadic of order $k\in\NN$} if every $J_i$ is dyadic of order $k$.

The next lemma is a direct consequence of Lemma~\ref{lm-formula}.
Recall that $F^m_{p,q}$ is the set of all non-equivalent first order formulas with $p$ free variables and quantifier depth at most $q$, and
the set $F^m_{p,q}$ is finite for all $m$, $p$ and $q$.

\begin{lemma}
\label{lm-formulas}
Let $m$, $p$ and $q$ be integers and let $J\subseteq [0,1]^{F^m_{p,q}}$ be a multidimensional interval.
For every $\varepsilon>0$, there exists a first order formula $\psi^{m,J,\varepsilon}_{p,q}$ such that
the following holds for every $m$-rooted modeling $M$:
\begin{itemize}
\item if $\langle\psi,M\rangle\in J_\psi$ for every $\psi\in F^m_{p,q}$, then $\langle\psi^{m,J,\varepsilon}_{p,q},M\rangle>1-\varepsilon$, and
\item if $\langle\psi,M\rangle$ is $\varepsilon$-far from $J_\psi$ for at least one $\psi\in F^m_{p,q}$, then $\langle\psi^{m,J,\varepsilon}_{p,q},M\rangle<\varepsilon$.
\end{itemize}
\end{lemma}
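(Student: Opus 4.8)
The plan is to derive Lemma~\ref{lm-formulas} from Lemma~\ref{lm-formula} by a straightforward conjunction over the finitely many formulas in $F^m_{p,q}$. First I would set $N=|F^m_{p,q}|$, which is finite by the \EF{}-game argument recalled above, and enumerate $F^m_{p,q}=\{\psi^{(1)},\dots,\psi^{(N)}\}$. For each index $\ell$, the $\ell$-th coordinate $J_\ell$ of the multidimensional interval $J$ is an ordinary interval $[a_\ell,b_\ell]\subseteq[0,1]$; applying Lemma~\ref{lm-formula} to the formula $\psi^{(\ell)}$, the interval $[a_\ell,b_\ell]$, and the error parameter $\varepsilon'$ (to be fixed below) yields a formula $\psi'_\ell$ such that $\langle\psi^{(\ell)},M\rangle\in[a_\ell,b_\ell]$ forces $\langle\psi'_\ell,M\rangle>1-\varepsilon'$, while $\langle\psi^{(\ell)},M\rangle\notin(a_\ell-\varepsilon,b_\ell+\varepsilon)$ forces $\langle\psi'_\ell,M\rangle<\varepsilon'$.

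Next I would take $\psi^{m,J,\varepsilon}_{p,q}:=\bigwedge_{\ell=1}^{N}\psi'_\ell$, after first reindexing so that all the $\psi'_\ell$ share the same free variables (pad each with dummy universally-inconsequential variables, or simply observe that a finite conjunction of first order formulas is again a first order formula over the union of their free variable sets). For the first bullet: if $\langle\psi,M\rangle\in J_\psi$ for every $\psi\in F^m_{p,q}$, then each $\langle\psi'_\ell,M\rangle>1-\varepsilon'$, and by a union bound the probability that a random tuple fails some conjunct is at most $N\varepsilon'$, so $\langle\psi^{m,J,\varepsilon}_{p,q},M\rangle>1-N\varepsilon'$. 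For the second bullet: if $\langle\psi^{(\ell_0)},M\rangle$ is $\varepsilon$-far from $J_{\ell_0}$ for some $\ell_0$, then in particular $\langle\psi^{(\ell_0)},M\rangle\notin(a_{\ell_0}-\varepsilon,b_{\ell_0}+\varepsilon)$, so $\langle\psi'_{\ell_0},M\rangle<\varepsilon'$, and since the conjunction implies $\psi'_{\ell_0}$ we get $\langle\psi^{m,J,\varepsilon}_{p,q},M\rangle\le\langle\psi'_{\ell_0},M\rangle<\varepsilon'$. Choosing $\varepsilon'=\varepsilon/N$ (or $\varepsilon'=\min\{\varepsilon,\varepsilon/N\}$ to be safe) makes both bounds at least as strong as required.

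There is essentially no hard part here: the lemma is, as the text says, a direct consequence, and the only points needing a word of care are (i) confirming $N<\infty$ so the conjunction is a genuine finite first order formula, (ii) aligning free variables across the $\psi'_\ell$, and (iii) the elementary union bound converting "each conjunct holds with probability $>1-\varepsilon'$" into "the conjunction holds with probability $>1-N\varepsilon'$." If one wishes to avoid even the union bound, one can instead iterate Lemma~\ref{lm-formula} in the style of its own proof—replacing the conjunction by a majority-type formula over many independent copies—but this is unnecessary overkill, and the plain conjunction with $\varepsilon'=\varepsilon/N$ is the cleanest route.
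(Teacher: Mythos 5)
Your proof is correct and is essentially the argument the paper has in mind: the paper states Lemma~\ref{lm-formulas} without proof as a ``direct consequence'' of Lemma~\ref{lm-formula}, and your construction (apply Lemma~\ref{lm-formula} to each of the finitely many $\psi\in F^m_{p,q}$ with parameter $\varepsilon/|F^m_{p,q}|$, conjoin, and use monotonicity plus a union bound) is exactly the routine verification being omitted.
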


If $\psi^{m,J,\varepsilon}_{p,q}$ is the formula from Lemma~\ref{lm-formulas},
then $\opsi^{m,J,\varepsilon}_{p,q}$ is the formula obtained from $\psi^{m,J,\varepsilon}_{p,q}$ by adding $m$ new free variables such that
$\opsi^{m,J,\varepsilon}_{p,q}$ is satisfied if and only if $\psi^{m,J,\varepsilon}_{p,q}$ is satisfied for the modeling obtained from $M$
by rooting at the $m$-tuple specified by the new free variables, i.e., the $m$ constants in $\psi^{m,J,\varepsilon}_{p,q}$ are replaced
with the new $m$ free variables of $\psi^{m,J,\varepsilon}_{p,q}$.
An $m$-tuple of vertices $v_1,\ldots,v_m$ of a modeling $M$ is {\em negligible}
if there exist integers $p$ and $q$ and a dyadic multidimensional interval $J\subseteq [0,1]^{F^m_{p,q}}$ such that
\begin{itemize}
\item $\langle\psi,M'\rangle_{\psi\in F^m_{p,q}}\in J$ where $M'$ is the $m$-rooted modeling obtained from $M$ by rooting at $v_1,\ldots,v_m$, and
\item there exists $\varepsilon_0>0$ such that $\langle\opsi^{m,J,\varepsilon}_{p,q},M\rangle\le\varepsilon$ for every $0<\varepsilon<\varepsilon_0$.
\end{itemize}
The next lemma asserts that very few $m$-tuples can be negligible.

\begin{lemma}
\label{lm-negligible}
If $M$ is a modeling and $m$ is an integer, then the set of negligible $m$-tuples of $M$ is a subset of a set of measure zero.
\end{lemma}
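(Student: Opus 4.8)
The plan is to present the set of negligible $m$-tuples as a countable union of null sets, one for each choice of the witnessing data $(p,q,J)$. Fix integers $p,q$ and a dyadic multidimensional interval $J\subseteq[0,1]^{F^m_{p,q}}$, write $M_v$ for the $m$-rooted modeling obtained from $M$ by rooting at a tuple $v=(v_1,\dots,v_m)$ of vertices, and set
$$A_{p,q,J}=\bigl\{v\in V(M)^m:\ \langle\psi,M_v\rangle\in J_\psi\ \text{for every}\ \psi\in F^m_{p,q}\bigr\}\mbox{.}$$
By the definition of a negligible tuple, the negligible $m$-tuples are exactly those lying in some $A_{p,q,J}$ for which $(p,q,J)$ in addition satisfies the second condition in that definition, namely $\langle\opsi^{m,J,\varepsilon}_{p,q},M\rangle\le\varepsilon$ for all sufficiently small $\varepsilon>0$; call such a triple \emph{bad}. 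Since each $F^m_{p,q}$ is finite, for every $(p,q)$ there are only countably many admissible $J$, so there are only countably many (bad) triples; hence it suffices to show that $A_{p,q,J}$ is null for every bad $(p,q,J)$.

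The crucial ingredient is the identity
$$\langle\opsi^{m,J,\varepsilon}_{p,q},M\rangle=\int_{V(M)^m}\langle\psi^{m,J,\varepsilon}_{p,q},M_v\rangle\dd\mu^{\otimes m}(v)\mbox{,}$$
where $\mu$ is the probability measure of $M$ and $\mu^{\otimes m}$ its $m$-fold product: the Stone pairing of the unrooted formula $\opsi^{m,J,\varepsilon}_{p,q}$ equals the average over the root tuple of the Stone pairing of $\psi^{m,J,\varepsilon}_{p,q}$ in the correspondingly rooted modeling. This follows from the construction of $\opsi^{m,J,\varepsilon}_{p,q}$ out of $\psi^{m,J,\varepsilon}_{p,q}$ --- its $m$ new free variables play the role of the roots --- together with Tonelli's theorem, splitting the appropriate product of copies of $M$ into the root coordinates and the remaining ones; the same splitting also shows that $v\mapsto\langle\psi,M_v\rangle$ is measurable for every $\psi$, so that $A_{p,q,J}$ is indeed measurable. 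This measurability/Fubini step is the only genuinely delicate point of the proof, and it rests on the defining property of a modeling that the set of tuples of vertices satisfying a fixed formula is measurable in the product measure.

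Granting the identity, fix a bad triple $(p,q,J)$, with witness $\varepsilon_0>0$, and any $0<\varepsilon<\varepsilon_0$. For every $v\in A_{p,q,J}$ we have $\langle\psi,M_v\rangle\in J_\psi$ for all $\psi\in F^m_{p,q}$, so Lemma~\ref{lm-formulas} applied to the modeling $M_v$ gives $\langle\psi^{m,J,\varepsilon}_{p,q},M_v\rangle>1-\varepsilon$. Since the integrand in the identity is nonnegative, this yields
$$\varepsilon\ \ge\ \langle\opsi^{m,J,\varepsilon}_{p,q},M\rangle\ \ge\ \int_{A_{p,q,J}}\langle\psi^{m,J,\varepsilon}_{p,q},M_v\rangle\dd\mu^{\otimes m}(v)\ \ge\ (1-\varepsilon)\,\mu^{\otimes m}(A_{p,q,J})\mbox{,}$$
hence $\mu^{\otimes m}(A_{p,q,J})\le\varepsilon/(1-\varepsilon)$. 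As this holds for all $0<\varepsilon<\varepsilon_0$, we get $\mu^{\otimes m}(A_{p,q,J})=0$, and taking the union over the countably many bad triples exhibits the set of negligible $m$-tuples as a subset of a null set. Apart from the measurability step, all that remains is the routine verification of countability and of the elementary estimate above.
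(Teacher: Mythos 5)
Your proposal is correct and follows essentially the same route as the paper: a countable union over the witnessing triples $(p,q,J)$, the Fubini/Tonelli identity expressing $\langle\opsi^{m,J,\varepsilon}_{p,q},M\rangle$ as the integral of $\langle\psi^{m,J,\varepsilon}_{p,q},M_v\rangle$ over the root tuple, the lower bound $1-\varepsilon$ from Lemma~\ref{lm-formulas} on the suspect set, and the resulting Markov-type estimate $\varepsilon/(1-\varepsilon)$ sent to zero. The paper merely discretizes $\varepsilon$ as $2^{-k}$ and phrases the conclusion as containment in an intersection of small sets, which is only a cosmetic difference from your argument.
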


\begin{proof}
Note that there are countably many triples $p$, $q$ and $J\subseteq [0,1]^{F^m_{p,q}}$ where $J$ is dyadic.
Hence,
it is enough to show for every $p$, $q$ and $J$,
that
if there exists $\varepsilon_0>0$ such that $\langle\opsi^{m,J,\varepsilon}_{p,q},M\rangle<\varepsilon$ for every $0<\varepsilon<\varepsilon_0$,
then there exists a set of measure zero containing all $m$-tuples $v_1,\ldots,v_m$ such that $\langle\psi,M'\rangle_{\psi\in F^m_{p,q}}\in J$ where $M'$ is obtained from $M$ by rooting at $v_1,\ldots,v_m$.
Fix $p$, $q$ and $J$ for the rest of the proof.
Let $X$ be the set of all such $m$-tuples, and
let $k_0$ be an integer such that $2^{-k_0}<\varepsilon_0$.

Let $F_k(v_1,\ldots,v_m)$ for $k\in\NN$ be the function from $M^m$ to $[0,1]$ defined to be $\langle\psi^{m,J,2^{-k}}_{p,q},M'\rangle$
where $M'$ is the modeling obtained from $M$ by rooting at $v_1,\ldots,v_m$.
Since the set of tuples satisfying $\opsi^{m,J,2^{-k}}_{p,q}$ is measurable,
the function $F_k$ is measurable in the corresponding product space.
Moreover, it holds that
$$\int F_k(v_1,\ldots,v_m)\dd v_1\cdots v_m=\langle\opsi^{m,J,2^{-k}}_{p,q},M\rangle<2^{-k}$$
for every $k\ge k_0$.
Observe that Lemma~\ref{lm-formulas} yields that
$$X\subseteq\bigcap_{k=k_0}^{\infty}F_k^{-1}([1-2^{-k},1])\;\mbox{.}$$
Since the function $F_k$ takes values between $0$ and $1$ (inclusively),
the measure of $F_k^{-1}([1-2^{-k},1])$ is less than $2^{-k}/(1-2^{-k})$.
It follows that $X$ is a subset of a set of measure zero.
\end{proof}

We are now ready to prove our main theorem.

\begin{theorem}
\label{thm-roots}
Let $M$ be a modeling that is a limit of a first order convergent sequence $(G_n)_{n\in\NN}$ and
let $m$ be a positive integer.
It holds with probability one that if $M'$ is a modeling obtained from $M$ by rooting at a random $m$-tuple of vertices of $M$,
then there exist a sequence $(v_{n,1},\ldots,v_{n,m})_{n\in\NN}$ of $m$-tuples such that
the graphs $(G_n)_{n\in\NN}$ rooted at these $m$-tuples first order converge to $M'$.
\end{theorem}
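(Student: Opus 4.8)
The plan is to prove the statement by showing that for almost every $m$-tuple $(v_1,\ldots,v_m)$ of $M$, the rooted modeling $M'$ obtained by rooting at this tuple really does arise as a first order limit of the $G_n$ rooted at suitably chosen tuples. By Lemma~\ref{lm-negligible} the set of negligible $m$-tuples has measure zero, so it suffices to prove the conclusion for every non-negligible $m$-tuple. Fix such a tuple and let $M'$ be the corresponding $m$-rooted modeling. The target is a single rooted modeling, so, since the set $F^m_{p,q}$ of inequivalent formulas is finite for all $p,q$, it is enough (by a diagonal argument over the countably many pairs $(p,q)$) to produce, for each $\delta>0$ and each $(p,q)$, a choice of tuples $(v_{n,1},\ldots,v_{n,m})$ in $G_n$ for all large $n$ making $|\langle\psi,(G_n)'\rangle-\langle\psi,M'\rangle|<\delta$ simultaneously for all $\psi\in F^m_{p,q}$; a standard diagonalization then yields one sequence working for all $(p,q)$ and hence for all first order formulas.

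So the core task is: given $(p,q)$ and $\delta>0$, for all sufficiently large $n$ find a tuple in $G_n$ whose vector of Stone pairings $(\langle\psi,(G_n)'\rangle)_{\psi\in F^m_{p,q}}$ is within $\delta$ of the vector $(\langle\psi,M'\rangle)_{\psi\in F^m_{p,q}}$. Here is where the machinery of Lemmas~\ref{lm-formula}--\ref{lm-negligible} comes in. Choose a dyadic multidimensional interval $J\subseteq[0,1]^{F^m_{p,q}}$ of small enough order that $J$ contains the vector $(\langle\psi,M'\rangle)_{\psi}$ and every point of $J$ is within $\delta/2$ of it. Because the fixed tuple is \emph{not} negligible, for this $J$ the second bullet in the definition of negligibility fails: there is no $\varepsilon_0>0$ with $\langle\opsi^{m,J,\varepsilon}_{p,q},M\rangle\le\varepsilon$ for all $0<\varepsilon<\varepsilon_0$. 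Hence there is a fixed $\varepsilon>0$, which we may take smaller than $\delta/2$, with $\langle\opsi^{m,J,\varepsilon}_{p,q},M\rangle>\varepsilon$, i.e. a positive-measure — in particular nonzero — fraction of $m$-tuples of $M$ satisfy $\opsi^{m,J,\varepsilon}_{p,q}$.

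Now transfer this to the graphs via first order convergence. Since $M$ is the limit of $(G_n)_{n\in\NN}$, we have $\langle\opsi^{m,J,\varepsilon}_{p,q},G_n\rangle\to\langle\opsi^{m,J,\varepsilon}_{p,q},M\rangle>\varepsilon$, so for all large $n$ the graph $G_n$ has at least one $m$-tuple $(v_{n,1},\ldots,v_{n,m})$ satisfying $\opsi^{m,J,\varepsilon}_{p,q}$. By the definition of $\opsi^{m,J,\varepsilon}_{p,q}$ this means the rooted graph $(G_n)'$ obtained by rooting $G_n$ at this tuple satisfies $\psi^{m,J,\varepsilon}_{p,q}$. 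Applying the contrapositive of the second bullet of Lemma~\ref{lm-formulas} to $(G_n)'$: if some $\langle\psi,(G_n)'\rangle$ were $\varepsilon$-far from $J_\psi$ then $\langle\psi^{m,J,\varepsilon}_{p,q},(G_n)'\rangle<\varepsilon<1$, contradicting that it is satisfied (value $1$). Hence every $\langle\psi,(G_n)'\rangle$ is $\varepsilon$-close to $J_\psi$, and therefore within $\varepsilon+\operatorname{diam}(J_\psi)\le\delta/2+\delta/2=\delta$ of $\langle\psi,M'\rangle$, as required.

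The main obstacle is conceptual rather than computational: recognizing that ``non-negligible'' is exactly the hypothesis needed to guarantee, for each $J$ refining the true Stone-pairing vector, a \emph{fixed} positive lower bound on $\langle\opsi^{m,J,\varepsilon}_{p,q},M\rangle$ for some $\varepsilon$ small relative to the scale of $J$ — it is this uniformity that survives passage to the limit and lets first order convergence hand back an actual tuple in each $G_n$. The remaining points are routine: the diagonalization over the countably many $(p,q)$ and over $\delta\to 0$, the bookkeeping that $F^m_{p,q}$ being finite makes the ``simultaneously for all $\psi$'' clause harmless, and the observation that controlling all formulas of every quantifier depth controls all first order formulas, so convergence of the rooted graphs to $M'$ follows. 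Finally, Theorem~\ref{thm-main} is the case $m=1$, and the general relational setting is identical with ``graph'' replaced throughout.
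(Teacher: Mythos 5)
Your overall strategy is the same as the paper's (reduce to non-negligible $m$-tuples via Lemma~\ref{lm-negligible}, pick a dyadic interval $J$ around the vector $\langle\psi,M'\rangle_{\psi\in F^m_{p,q}}$, use non-negligibility to get some $\varepsilon<\delta/2$ with $\langle\opsi^{m,J,\varepsilon}_{p,q},M\rangle>\varepsilon$, transfer to $G_n$ by convergence, and read off closeness of the Stone pairings from Lemma~\ref{lm-formulas}), but the final transfer step as you wrote it is wrong. The formula $\psi^{m,J,\varepsilon}_{p,q}$ is not a sentence: it is built from the formulas $\psi_n$ of Lemma~\ref{lm-formula}, which have $nk$ free variables, so $\opsi^{m,J,\varepsilon}_{p,q}$ has many free variables beyond the $m$ root variables. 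Consequently, from $\langle\opsi^{m,J,\varepsilon}_{p,q},G_n\rangle>\varepsilon$ you do not get ``an $m$-tuple satisfying $\opsi^{m,J,\varepsilon}_{p,q}$'' after which the rooted graph ``satisfies $\psi^{m,J,\varepsilon}_{p,q}$ (value $1$)''; a single satisfying assignment of the non-root variables only gives $\langle\psi^{m,J,\varepsilon}_{p,q},G'_n\rangle>0$, and that is far too weak for the contrapositive of the second bullet of Lemma~\ref{lm-formulas}, which needs $\langle\psi^{m,J,\varepsilon}_{p,q},G'_n\rangle\ge\varepsilon$. Indeed, $\psi_n$ can have a satisfying assignment (repeat a few satisfying $k$-tuples the right number of times) even when $\langle\psi,G'_n\rangle$ is very far from $J_\psi$, so the step ``one satisfying tuple $\Rightarrow$ all Stone pairings are $\varepsilon$-close to $J$'' genuinely fails.

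The correct step, which is what the paper does, is an averaging argument over the root variables: $\langle\opsi^{m,J,\varepsilon}_{p,q},G_n\rangle$ is the average over $m$-tuples $(v_{n,1},\ldots,v_{n,m})$ of $\langle\psi^{m,J,\varepsilon}_{p,q},G'_n\rangle$, so the bound $\langle\opsi^{m,J,\varepsilon}_{p,q},G_n\rangle>\varepsilon$ yields some root $m$-tuple with $\langle\psi^{m,J,\varepsilon}_{p,q},G'_n\rangle>\varepsilon$, and only then does Lemma~\ref{lm-formulas} give that every $\langle\psi,G'_n\rangle$ is $\varepsilon$-close to $J_\psi$, hence within $2^{-k}+\varepsilon<\delta$ of $\langle\psi,M'\rangle$. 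With that replacement your argument matches the paper's proof; the remaining ingredients (the use of non-negligibility to produce $\varepsilon$ below any prescribed threshold, the finiteness of $F^m_{p,q}$, and the diagonalization over $p$, $q$, $\delta$) are fine.
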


\begin{proof}
By Lemma~\ref{lm-negligible},
we can assume that the randomly chosen $m$-tuple $w_1,\ldots,w_m$ of the vertices of $M$ is not negligible.
It is enough to show for every $p$, $q$ and $\delta>0$ that there exists $n_0$ such that
every graph $G_n$, $n\ge n_0$, contains an $m$-tuple $v_{n,1},\ldots,v_{n,m}$ of vertices such that
the graph $G'_n$ obtained from $G_n$ by rooting at the vertices $v_{n,1},\ldots,v_{n,m}$ satisfies that
\begin{equation}
|\langle\psi,G'_n\rangle-\langle\psi,M'\rangle|\le\delta\label{eq-1}
\end{equation}
for every $\psi\in F^m_{p,q}$. Note that (\ref{eq-1}) implies that
$$|\langle\psi,G'_n\rangle-\langle\psi,M'\rangle|\le\delta$$
for every $\psi\in F^m_{p',q'}$ for $p'\in [p]$ and $q'\in [q]$.

Fix the integers $p$ and $q$ and the real $\delta>0$ for the rest of the proof.
Choose an integer $k$ such that $2^{-k}<\delta$ and a real $\varepsilon>0$ such that $2^{-k}+\varepsilon<\delta$.
Further, let $J\subseteq [0,1]^{F^m_{p,q}}$ be the dyadic multidimensional interval of order $k$
containing the point $\langle\psi,M'\rangle_{\psi\in F^m_{p,q}}$.
Since the $m$-tuple $v_{n,1},\ldots,v_{n,m}$ is not negligible, there exists $\varepsilon'<\varepsilon$ such that
$$\langle\opsi^{m,J,\varepsilon'}_{p,q},M\rangle>\varepsilon'\;\mbox{.}$$
Since the sequence $(G_n)_{n\in\NN}$ converges to $M$, there exists $n_0$ such that
\begin{equation}
\langle\opsi^{m,J,\varepsilon'}_{p,q},G_n\rangle>\varepsilon'\label{eq-2}
\end{equation}
for every $n\ge n_0$.
By the definition of the formula $\opsi^{m,J,\varepsilon'}_{p,q}$,
the inequality (\ref{eq-2}) implies that
every graph $G_n$, $n\ge n_0$, contains an $m$-tuple $v_{n,1},\ldots,v_{n,m}$ of vertices such that
\begin{equation}
\langle\psi^{m,J,\varepsilon'}_{p,q},G'_n\rangle>\varepsilon'\label{eq-3}
\end{equation}
where $G'_n$ is obtained from $G_n$ by rooting at $v_{n,1},\ldots,v_{n,m}$.
By Lemma~\ref{lm-formulas},
the Stone pairing $\langle\psi,G'_n\rangle$ is $\varepsilon'$-close to $J_\psi$ for every $\psi\in F^m_{p,q}$.
It follows that
$$|\langle\psi,G'_n\rangle-\langle\psi,M\rangle|<2^{-k}+\varepsilon'<\delta$$
for every $\psi\in F^m_{p,q}$.
The proof of the theorem is now finished.
\end{proof}

\section{Counterexample}
\label{sec-counter}

We now show that the statement of Theorem~\ref{thm-main} cannot be strengthened to all vertices.
Before doing so, we need to introduce some additional notation.

If a (finite or infinite) graph $G$ is bipartite,
we write $G(A,B)$ where $A$ and $B$ are the two parts of $G$.
The adjacency matrix $M$ of $G$ is the matrix with rows indexed by $A$ and columns indexed by $B$ such that
$M_{ab}$ is equal to $1$ if the vertices $a\in A$ and $b\in B$ are adjacent, and it is equal to zero, otherwise.
If $G(A,B)$ is a bipartite graph and $W$ is a subset of its vertices,
then $W_A$ is $A\cap W$ and $W_B$ is $B\cap W$.
The adjacency matrix of $G$ restricted to $W$ is the submatrix with rows and columns indexed by $W_A$ and $W_B$, respectively.
Suppose that $W$ is a subset of vertices of $G(A,B)$, $W'$ is a subset of vertices of $G'(A',B')$ and
there is a one-to-one correspondence between the vertices of $W$ and $W'$.
When we say that the adjacency matrices of $G$ and $G'$ restricted to $W$ and $W'$ are the same,
we mean that they are the same in the stronger sense that the rows/columns for the corresponding vertices are the same.

A bipartite graph $G(A,B)$ is $\ell$-universal,
if every vector from $\{0,1\}^B$ appears at least $\ell$ times among the rows of the adjacency matrix of $G$.
If $W$ is a subset of vertices of $G(A,B)$,
then the $\ell$-shadow of $W$ is the multiset $S$ such that
each of the vectors $u\in\{0,1\}^{W_A}$ is included to $S$ exactly $\min\{k,\ell\}$ times
where $k$ it the number of times $u$ appears among the columns of the adjacency matrix of $G$ restricted to $W_A\times (B\setminus W_B)$.
If $W_A=\emptyset$, then the $\ell$-shadow of $W$ consists of $\min\{|B|,\ell\}$ null vectors (i.e. vectors of dimension zero).

The following is the key lemma in our construction.

\begin{lemma}
\label{lm-key}
Let $p$ and $q$ be two non-negative integers.
Suppose that $G(A,B)$ and $G'(A',B')$ are two $(p+q)$-universal graphs and
that $w_1,\ldots,w_q$ and $w'_1,\ldots,w'_q$ are two sequences of the vertices of $G(A,B)$ and $G'(A',B')$, respectively.
Let $W=\{w_1,\ldots,w_q\}$ and $W'=\{w'_1,\ldots,w'_q\}$.
If the adjacency matrices of $G(A,B)$ and $G'(A',B')$ restricted to $W_A\times W_B$ and to $W'_{A'}\times W'_{B'}$, respectively,
are the same (with the row/column corresponding to $w_i$ being the same as that of $w'_i$), and
the $2^{p}$-shadows of $W$ and $W'$ are also the same, then the duplicator has a winning strategy
for the $(p+q)$-round \EF{} game where the vertices chosen in the first $q$ rounds are $w_1,\ldots,w_q$ and $w'_1,\ldots,w'_q$.
\end{lemma}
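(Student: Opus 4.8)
Our plan is to exhibit a winning strategy for the duplicator in the $p$ rounds that follow the prescribed first $q$ rounds, maintaining an invariant that strengthens the two hypotheses. At any moment of the game, let $U$ and $U'$ be the sets of vertices already played in $G$ and in $G'$, together with the bijection pairing the $i$-th choice in $G$ with the $i$-th choice in $G'$; at the start $U=W$ and $U'=W'$. After $j$ of the $p$ remaining rounds have been played ($0\le j\le p$), we maintain that \emph{(i)} the adjacency matrices of $G$ and $G'$ restricted to $U_A\times U_B$ and to $U'_{A'}\times U'_{B'}$ agree via the pairing, and \emph{(ii)} the $2^{p-j}$-shadows of $U$ and $U'$ coincide. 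The hypotheses are precisely \emph{(i)} and \emph{(ii)} for $j=0$, and \emph{(i)} for $j=p$ says that the subgraphs induced by the $p+q$ played vertices are isomorphic via the pairing (a bipartite graph has no other edges), which is the duplicator's winning condition; so it is enough to preserve the invariant through one more round, the shadow level dropping from $2^{p-j}$ to $2^{p-j-1}$. Throughout we use the elementary observation that if two $\ell$-shadows coincide then so do the corresponding $\ell'$-shadows for any $\ell'\le\ell$.

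By symmetry of the hypotheses in $G$ and $G'$, assume the spoiler plays in $G$. If the spoiler re-chooses a played vertex, the duplicator answers with its pair, \emph{(i)} is unchanged and \emph{(ii)} passes to the smaller level by the observation above. If the spoiler chooses a new vertex $b\in B\setminus U_B$, let $u\in\{0,1\}^{U_A}$ be its adjacency pattern on $U_A$ and $u'$ the paired pattern on $U'_{A'}$; the multiplicity of $u$ in the $2^{p-j}$-shadow of $U$ is positive, hence so is that of $u'$ in the shadow of $U'$, and so there is a vertex $b'\in B'\setminus U'_{B'}$ with pattern $u'$ on $U'_{A'}$, which the duplicator plays. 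Moving to the new played sets deletes $b$, resp.\ $b'$, from the pool $B\setminus U_B$, resp.\ $B'\setminus U'_{B'}$, lowering a single multiplicity by one on each side; a short case check comparing $\min\{\cdot,2^{p-j}\}$ with $\min\{\cdot,2^{p-j-1}\}$ restores \emph{(ii)} at the new level, and \emph{(i)} is immediate.

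The substantive case is a new vertex $a\in A\setminus U_A$; let $u$ be its adjacency pattern on $U_B$ and $u'$ the paired pattern on $U'_{B'}$. Partition $B\setminus U_B$ into classes $S_v$ according to the adjacency pattern $v\in\{0,1\}^{U_A}$, and likewise $B'\setminus U'_{B'}$ into classes $S'_{v'}$; by \emph{(ii)}, $|S'_{v'}|=|S_v|$ whenever $|S_v|<2^{p-j}$, and $|S'_{v'}|\ge 2^{p-j}$ whenever $|S_v|\ge 2^{p-j}$, so in particular $S_v=\emptyset$ iff $S'_{v'}=\emptyset$. The vertex $a$ splits each class $S_v$ into a part of $\alpha_v$ neighbours and a part of $\beta_v$ non-neighbours. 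We then prescribe a full vector $r'\in\{0,1\}^{B'}$: it equals $u'$ on $U'_{B'}$, and on each nonempty class $S'_{v'}$ it is chosen so that the number of prescribed neighbours, resp.\ non-neighbours, is exactly $\alpha_v$, resp.\ $\beta_v$, if that number is below $2^{p-j-1}$, and at least $2^{p-j-1}$ otherwise. A short case analysis confirms that $|S'_{v'}|$ always accommodates this prescription, and this is exactly the place where the slack provided by \emph{(ii)} at level $2^{p-j}$ is consumed. Because $G'$ is $(p+q)$-universal, at least $p+q$ vertices of $A'$ have adjacency vector $r'$; since $|U'_{A'}|\le q+j<p+q$, one of them lies outside $U'_{A'}$, and the duplicator plays it as $a'$. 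Then \emph{(i)} holds since $r'$ agrees with $u'$ on $U'_{B'}$, and \emph{(ii)} at level $2^{p-j-1}$ holds by the construction of $r'$, unrealised patterns contributing zero on both sides.

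We expect the $a\in A$ case to be the main obstacle: one must see that the right object to track is the refinement of the pattern classes, and that the capped multiplicities recorded by the $2^{p-j}$-shadow are precisely what is needed to realise the prescribed neighbour/non-neighbour split in all classes simultaneously, after which $(p+q)$-universality supplies the vertex $a'$. The other ingredients — the base case, the re-choice and $b\in B$ cases, and the monotonicity of shadows in $\ell$ — are routine.
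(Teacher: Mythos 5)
Your proof is correct and follows essentially the same route as the paper's: you maintain the matching restricted adjacency matrices together with matching shadows whose level halves each round (the paper phrases this as induction on $p$), answer a $B$-move via positivity of the shadow multiplicity, and answer an $A$-move by prescribing a full row realized through $(p+q)$-universality, with your per-class $(\alpha_v,\beta_v)$ prescription being the same construction as the paper's $(m_0,m_1)$ assignment. The case checks you leave as "short" are indeed routine and hold, so no gap.
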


\begin{proof}
We proceed by induction on $p$.
If $p=0$, then the graphs induced by the vertices of $W$ and $W'$ are isomorphic
since the adjacency matrices of $G$ and $G'$ restricted to $W$ and $W'$ are the same.

Suppose that $p>0$.
By symmetry, we can assume that the spoiler chooses a vertex of $G$ in the next round.
Let $w_{q+1}$ be the chosen vertex.
If $w_{q+1}=w_i$ for some $1\le i\le q$, the duplicator responds with $w'_i$.
So, we can now assume that $w_{q+1}$ is different from all the vertices $w_1,\ldots,w_q$ and
we distinguish two cases based on whether $w_{q+1}$ belongs to $A$ or $B$.

Let us start with the analysis of the case when $w_{q+1}\in A\setminus A_W$.
Let $x$ be the row of the adjacency matrix of $G$ corresponding to $w_{q+1}$.
We will construct a vector $x'\in\{0,1\}^{B'}$ which will determine the response of the duplicator.

Set $x'_{w'_i}=x_{w_i}$ for $w'_i\in W'_{B'}$.
Fix a vector $u\in\{0,1\}^{W_A}$.
Let $u_0,u_1\in\{0,1\}^{W_A\cup\{w_{q+1}\}}$ be the two extensions of $u$, and
let $m_0$ and $m_1$ be the multiplicities of $u_0$ and $u_1$, respectively, in the $2^{p-1}$-shadow of $W\cup\{w_{q+1}\}$.
Finally, let $W_u$ be the set of the vertices $v$ of $B'\setminus W'_{B'}$ such that the column of $v$ restricted to $W'_{A'}$ is $u$.
If $m_0+m_1<2^{p}$, then the $2^p$-shadow of $W'$ contains the vector $u$ exactly $m_0+m_1$ times.
Set $x'_v$ to $0$ for $m_0$ of the vertices $v\in W_u$ and to $1$ for $m_1$ of such vertices.
If $m_0+m_1\ge 2^p$, at least one of the numbers $m_0$ or $m_1$ is at least $2^{p-1}$.
If $m_0\ge 2^{p-1}$, set $x'_v$ to $1$ for $\min\{m_1,2^{p-1}\}$ of vertices $v\in W_u$ and to $0$ for all other $v\in W_u$.
If $m_0<2^{p-1}$ and $m_1\ge 2^{p-1}$, set $x'_v$ to $0$ for $m_0$ of vertices $v\in W_u$ and to $1$ for all other $v\in W_u$.
Performing this for every vector $u\in\{0,1\}^{W_A}$, the entire vector $x\in\{0,1\}^{B'}$ is defined.

Since the graph $G'$ is $(p+q)$-universal,
there exists a vertex $w'_{q+1}\in A'$ different from the vertices $w'_1,\ldots,w'_q$
such that the row of $w'_{q+1}$ in the adjacency matrix of $G'$ is equal to $x'$.
The duplicator responds with the vertex $w'_{q+1}$.
Observe that the choice of $x'$ implies that
the adjacency matrices of $G$ and $G'$ restricted to $W\cup\{w_{q+1}\}$ and $W'\cup\{w'_{q+1}\}$, respectively, are the same and
that the $2^{p-1}$-shadows of $W\cup\{w_{q+1}\}$ and $W'\cup\{w'_{q+1}\}$ are also the same.
The existence of the winning strategy for the duplicator now follows by induction.

It remains to consider the case that $w_{q+1}\in B\setminus B_W$.
Let $u$ be the column of $w_{q+1}$ in the adjacency matrix of $G$ restricted to $W_A$.
Clearly, $u$ is contained in the $2^{p}$-shadow of $W$.
Consequently,
there is a vertex $w'_{q+1}\in B'\setminus B'_{W'}$ such that the column of $w'_{q+1}$ in the adjacency matrix of $G'$ restricted to $W'_{A'}$ is $u$.
The duplicator responds with the vertex $w'_{q+1}$.
The adjacency matrices of $G$ and $G'$ restricted to $W\cup\{w_{q+1}\}$ and $W'\cup\{w'_{q+1}\}$, respectively, are the same.
The $2^{p-1}$-shadow of $W\cup\{w_{q+1}\}$ in $G$ is obtained from the $2^{p}$-shadow of $W$ by removing $u$ from the shadow and
restricting the multiplicity of each vector to be at most $2^{p-1}$.
Likewise,
the $2^{p-1}$-shadow of $W'\cup\{w'_{q+1}\}$ in $G'$ is obtained from the $2^{p}$-shadow of $W'$ by removing $u$ from the shadow and
restricting the multiplicity of each vector to be at most $2^{p-1}$.
Note that if $A=A'=\emptyset$, each of the $2^{p-1}$-shadows consists of $2^{p-1}$ null vectors.
Since the $2^{p}$-shadows of $W$ and $W'$ are the same, the $2^{p-1}$-shadows of $W\cup\{w_{q+1}\}$ and $W'\cup\{w'_{q+1}\}$ are also the same.
The existence of the winning strategy for the duplicator now follows by induction.
\end{proof}

Let $\ss=(s_n)_{n\in\NN}$ be a sequence of integers such that $s_n\ge 2$ for every $n\in\NN$.
For each $x\in [0,1]$, there exists a unique sequence $(x_n)_{n\in\NN}$ of integers such that
$$x=\sum_{n=1}^\infty \frac{x_n}{\prod_{k=1}^n s_k}\;\mbox{,}$$
$0\le x_n<s_n$ for every $n$, and there is no $n\in\NN$ such that $x_n\not=s_n$ and $x_{n'}=s_{n'}$ for every $n'\ge n$.
We define $M_{\ss}$ to be the following modeling.
The vertex set of $M_{\ss}$ is the unit square $[0,1]^2$ with the uniform measure on its Borel subsets.
Fix a sequence $(z_n)_{n\in\NN}$ of distinct vertices, say $z=(2^{-n},0)$, and let $Z=\{z_n,n\in\NN\}$.
The modeling $M_{\ss}(A,B)$ is the bipartite graph with $A=[0,1]^2\setminus Z$ and $B=Z$ such that
a vertex $(x,y)\in A=[0,1]^2\setminus Z$ is adjacent to a vertex $z_n\in B=Z$ if and only if $x_n\not=0$.

We next verify that every first order definable subset of $M_{\ss}^k$ is Borel.
A subset $X$ of $M_{\ss}^\ell$ is {\em basic}
if there exist $v_1,\ldots,v_p\in B$ (we allow $p=0$),
a matrix $M\in\{0,1\}^{\ell\times p}$, an integer $q$ and a multiset $S\subseteq\{0,1\}^p$ such that
the set $X$ is formed by all $\ell$-tuples $w_1,\ldots,w_\ell\in A$ such that
the adjacency matrix restricted to $\{v_1,\ldots,v_p,w_1,\ldots,w_\ell\}$ is $M$ and
the $2^q$-shadow of $\{v_1,\ldots,v_p,w_1,\ldots,w_\ell\}$ is $S$.
In particular, if $X$ is basic, then $X\subseteq A^\ell$.

Let $X(\ell,B',M,T)$
for a non-negative integer $\ell$, a finite subset $B'\subseteq B$, a matrix $M\in\{0,1\}^{\ell\times B'}$ and a subset $T\subseteq\{0,1\}^{\ell}$,
be the set of $\ell$-tuples $w_1,\ldots,w_\ell\in A$ such that
the the adjacency matrix of $M_{\ss}$ restricted to $\{w_1,\ldots,w_\ell\}\times B'$ is $M$ and
all the columns of the adjacency matrix not associated with vertices of $B'$ belong to $T$ after restricting to $w_1,\ldots,w_\ell$.
Observe that the set $X(\ell,B',M,T)\subseteq A^\ell$ is Borel for all $\ell$, $B'$, $M$ and $T$.
Since every basic set is a countable union of sets $X(\ell,B',M,T)$, every basic set is Borel.

Fix a first order formula $\psi$ with $k$ free variables and quantifier depth $d$.
By Lemma~\ref{lm-key},
the set of $k$-tuples of $M_{\ss}^k$ satisfying $\psi$ can be partitioned into countably many subsets such that
each of them after a suitable permutation of coordinates
is either a basic set or a product of a basic set and one or more single element subsets of $B$.
Consequently, every first order definable subset of $M_{\ss}^k$ is Borel.

The next lemma directly follows from the definition of a modeling $M_{\ss}$.

\begin{lemma}
\label{lm-modeling}
Let $\ss=(s_n)_{n\in\NN}$ be a sequence of integers such that $s_n\ge 2$ for every $n\in\NN$.
The modeling $M_{\ss}(A,B)$ is $\ell$-universal for every $\ell\in\NN$.
For all integers $p$ and $\ell$, it holds with probability one that
a random $p$-tuple of vertices of $M_{\ss}$ contains $p$ different vertices from $A$ and
the $\ell$-shadow of the $p$-tuple is the multiset containing each vector $\{0,1\}^p$ with multiplicity $\ell$.
\end{lemma}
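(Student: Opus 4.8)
The plan is to verify the three assertions in turn, each of them elementary once the right facts about the $\ss$-expansion and about the set $Z$ are recorded. For \emph{$\ell$-universality}, given any target $v\in\{0,1\}^B$ I would produce uncountably many rows of the adjacency matrix equal to $v$. Pick $x\in[0,1]$ whose $\ss$-digits satisfy $x_n=0$ whenever $v_{z_n}=0$ and $x_n\in\{1,\dots,s_n-1\}$ whenever $v_{z_n}=1$; this is possible because $s_n\ge 2$ makes each set $\{1,\dots,s_n-1\}$ nonempty (the one degenerate case, $v\equiv 1$ with $s_n$ eventually equal to $2$, is handled by the point $x=1$, whose expansion has all digits equal to $s_n-1\ge 1$). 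The row of a vertex $(x,y)\in A$ depends only on $x$, and by the adjacency rule it equals $v$; since the fibre $\{x\}\times[0,1]$ meets the countable set $Z$ in at most one point, it contains uncountably many --- in particular at least $\ell$ --- vertices of $A$ whose row is $v$. Hence $M_{\ss}(A,B)$ is $\ell$-universal for every $\ell$.

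For the statement that a random $p$-tuple is distinct and lies in $A$: the set $Z$ is countable, hence Lebesgue-null in $[0,1]^2$, so each coordinate of a uniform random $p$-tuple lies in $A$ with probability one; and since the measure on $[0,1]^2$ is non-atomic, any two coordinates agree with probability zero. A union bound over the $p+\binom{p}{2}$ relevant events gives the claim with probability one.

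For the $\ell$-shadow, condition on the $p$-tuple $(w_1,\dots,w_p)$ being distinct and contained in $A$ and write $w_i=(\xi^{(i)},\eta^{(i)})$. Since the tuple lies in $A$ we have $W_A=W$ and $W_B=\emptyset$, so the $\ell$-shadow records, for each $u\in\{0,1\}^p$, the quantity $\min\{k_u,\ell\}$, where $k_u$ is the number of indices $n$ with $\bigl(\mathbf 1[\xi^{(i)}_n\ne 0]\bigr)_{i\in[p]}=u$. The essential input is the standard fact that, under Lebesgue measure, the $\ss$-digits of a uniform point of $[0,1]$ are mutually independent with the $n$-th digit uniform on $\{0,\dots,s_n-1\}$; consequently the array $(\xi^{(i)}_n)_{i\in[p],\,n\in\NN}$ consists of mutually independent random variables. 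Fix $u$; then the events $E_n(u)=\{(\mathbf 1[\xi^{(i)}_n\ne 0])_{i\in[p]}=u\}$ are independent over $n$, each of positive probability, and since $s_n\ge 2$ keeps $\Pr[E_n(u)]$ bounded away from $0$ the series $\sum_n\Pr[E_n(u)]$ diverges; by the second Borel--Cantelli lemma $k_u=\infty$ almost surely. Intersecting over the finitely many $u\in\{0,1\}^p$, with probability one $k_u=\infty$ for every $u$, so $\min\{k_u,\ell\}=\ell$ and the $\ell$-shadow is precisely the multiset containing every vector of $\{0,1\}^p$ with multiplicity $\ell$.

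The whole argument is routine; the only mildly technical ingredient is the independence and uniform distribution of the $\ss$-digits, and I expect the remaining difficulty to be bookkeeping --- matching the combinatorial definition of the $\ell$-shadow to the events $E_n(u)$ and dealing with the degenerate endpoint $x=1$ of the expansion.
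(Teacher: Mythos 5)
The paper offers no argument for this lemma at all (it is stated to follow directly from the definition of $M_{\ss}$), and your verification takes exactly the route one would expect: realize each prescribed row on a whole fibre $\{x\}\times[0,1]$, use that $Z$ is null and the measure non-atomic for the distinctness claim, and use independence and uniformity of the $\ss$-digits together with Borel--Cantelli for the shadow. For the only case the paper actually uses ($s_n=3$ for all $n$), and more generally when the $s_n$ are bounded with $s_n\ge 3$ infinitely often, your argument is correct and complete.

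Two of your steps, however, fail under the stated hypothesis $s_n\ge 2$ alone --- and they fail exactly where the lemma itself is problematic, which is worth flagging. First, $\Pr[E_n(u)]=(1/s_n)^{z}(1-1/s_n)^{p-z}$, where $z$ is the number of zero entries of $u$, is \emph{not} bounded away from zero when $s_n\to\infty$: only the factors $1-1/s_n\ge 1/2$ are. If, say, $\sum_n 1/s_n<\infty$, then for $p=1$ and $u=(0)$ the series $\sum_n\Pr[E_n(u)]$ converges, a random vertex has only finitely many zero digits, and with positive probability it has none at all, so the $\ell$-shadow misses the zero vector; the conclusion is then false for such $\ss$, so the Borel--Cantelli step cannot be repaired --- what your argument actually proves is the statement under an extra hypothesis such as $\sup_n s_n<\infty$ (or $\sum_n s_n^{-p}=\infty$ for every $p$), which covers $s_n\equiv 3$. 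Second, in the universality part you must check that the digit string you prescribe is the \emph{canonical} expansion that defines adjacency: strings that are eventually equal to $s_n-1$ without being identically so are excluded by the uniqueness convention. If $s_n\ge 3$ for infinitely many $n$ you may take the digit $1$ at every position with $v_{z_n}=1$ and the string is canonical, and $v\equiv 1$ is covered by $x=1$ as you say; but if $s_n=2$ for all large $n$ and $v$ is eventually $1$ yet has a zero entry, every string with support $v$ is non-canonical, no row equals $v$, and $M_{\ss}$ is not even $1$-universal (e.g.\ $\ss=(2,2,\ldots)$ and $v=(0,1,1,\ldots)$) --- so the degenerate case you single out is not the only one. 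In short: right approach, sound for the application in the paper, but as a proof of the lemma as literally stated both quantitative steps have gaps, reflecting hypotheses missing from the statement itself.
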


Observe that Lemmas~\ref{lm-key} and~\ref{lm-modeling} yield that
$\langle\psi,M_{\ss}\rangle=\langle\psi,M_{\ss'}\rangle$ for every first order formula $\psi$ and
any two sequences $\ss$ and $\ss'$.

We now define the graph $H_n(A,B)$ to be the graph with $A=[2^n]\times [n]$ and $B=[n]$ such that
$(a,a')\in A$ is adjacent to $b\in B$ iff the $b$-th bit of $a$ when written in binary is $1$.
We summarize the properties of the graphs $H_n$ in the next lemma.

\begin{lemma}
\label{lm-graphs}
Let $p$ and $\ell$ be two integers.
The graph $H_n(A,B)$ is $\ell$-universal if $n\ge\ell$, and
the probability that a random $p$-tuple of vertices of $H_n$ contains
$p$ different vertices from $A$ and the $\ell$-shadow of them
is the multiset containing each vector $\{0,1\}^p$ with multiplicity $\ell$
tends to one as $n$ tends to infinity.
\end{lemma}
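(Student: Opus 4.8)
The plan is to prove the two assertions of Lemma~\ref{lm-graphs} separately, the first by a one-line counting observation and the second by a standard coupon-collector estimate. For the universality claim, note that the map sending $a\in[2^n]$ to its length-$n$ binary string is a bijection onto $\{0,1\}^n$, and that the row of the adjacency matrix of $H_n$ indexed by $(a,a')\in A=[2^n]\times[n]$ is precisely this string, independently of $a'$. Hence every vector of $\{0,1\}^B=\{0,1\}^n$ occurs exactly $n$ times among the rows, so $H_n$ is $\ell$-universal exactly when $n\ge\ell$; in particular it is $\ell$-universal for every $n\ge\ell$.

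For the second assertion, let $(w_1,\dots,w_p)$ be uniform in $V(H_n)^p$. Since the probability of a repeated coordinate is at most $\binom{p}{2}/|V(H_n)|\to 0$, and since $|A|/|V(H_n)|=2^n/(2^n+1)$ gives that all $p$ coordinates lie in $A$ with probability $(1+2^{-n})^{-p}\to 1$, it suffices to work conditioned on all $w_i\in A$. Writing $w_i=(a_i,a_i')$, the strings $a_1,\dots,a_p\in\{0,1\}^n$ are then independent and uniform; arranging them as the rows of a $p\times n$ matrix $R$, the $n$ columns of $R$ are independent uniform elements of $\{0,1\}^p$. Let $C$ be the event that every vector of $\{0,1\}^p$ appears at least $\ell$ times among these columns. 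For a fixed $u$, the number of columns equal to $u$ is a binomial random variable with parameters $n$ and $2^{-p}$, whose mean $n2^{-p}$ tends to infinity, so a Chernoff lower-tail bound together with a union bound over the $2^p$ choices of $u$ gives $\Pr[\overline{C}\mid\text{all }w_i\in A]\le 2^p\Pr[\mathrm{Bin}(n,2^{-p})<\ell]\to 0$.

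It remains to check that on the event $C$ (together with all $w_i\in A$) the desired conclusion holds. First, $C$ forces the rows of $R$, hence the $a_i$, hence the vertices $w_i$, to be pairwise distinct, since two equal rows would make two of the $2^p$ column patterns unattainable. Second, with $W=\{w_1,\dots,w_p\}$ we have $W_A=W$ and $W_B=\emptyset$, so by definition the $\ell$-shadow of $W$ assigns to each $u\in\{0,1\}^{W_A}\cong\{0,1\}^p$ the multiplicity $\min\{k_u,\ell\}$, where $k_u$ is the number of columns equal to $u$ in the adjacency matrix restricted to $W_A\times(B\setminus W_B)=W_A\times B$; but that submatrix coincides with $R$, so $k_u\ge\ell$ on $C$ and the multiplicity is exactly $\ell$. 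Thus on $C$ the $\ell$-shadow is the multiset containing each vector of $\{0,1\}^p$ with multiplicity $\ell$, and the probability of the event described in the lemma is at least $(1+2^{-n})^{-p}\,\Pr[C\mid\text{all }w_i\in A]$, which tends to $1$.

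The argument is essentially routine; the only step requiring a genuine estimate is the bound $\Pr[\overline C]\to 0$, a coupon-collector statement dispatched by Chernoff and the divergence of the mean $n2^{-p}$, and the only step requiring care is matching the formal definition of the $\ell$-shadow (columns of a suitably restricted adjacency matrix, truncated at multiplicity $\ell$) with the columns of the random matrix $R$. I would also remark that this is exactly the finite counterpart of Lemma~\ref{lm-modeling}, with the role of the $n$ independent uniform columns played there by the ``digits'' of the coordinates of the random vertices.
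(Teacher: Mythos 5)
Your proof is correct: the counting of rows gives $\ell$-universality for $n\ge\ell$, and the observation that, conditioned on all coordinates lying in $A$, the columns of the restricted adjacency matrix are $n$ i.i.d.\ uniform vectors of $\{0,1\}^p$, followed by a Chernoff/union bound, establishes the probabilistic claim. The paper states Lemma~\ref{lm-graphs} without proof, treating it as routine, and your argument is precisely the standard verification it implicitly relies on (with only the cosmetic caveat that $a=2^n$ corresponds to the all-zero row, so the map $a\mapsto$ binary string is a bijection onto $\{0,1\}^n$ in the mod-$2^n$ sense).
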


The next theorem follows directly from Lemmas~\ref{lm-key}--\ref{lm-graphs}.

\begin{theorem}
\label{lm-limit}
Let $\ss=(s_n)_{n\in\NN}$ be a sequence of integers such that $s_n\ge 2$ for every $n\in\NN$.
It holds for every first order formula $\psi$ that
$$\lim_{n\to\infty}\langle\psi,H_n\rangle = \langle\psi,M_{\ss}\rangle\;\mbox{.}$$
In particular, the modeling $M_{\ss}$ is a limit of $(H_n)_{n\in\NN}$.
\end{theorem}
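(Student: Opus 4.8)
The plan is to fix a first order formula $\psi$ with $k$ free variables and quantifier depth $d$ and to compare $\langle\psi,H_n\rangle$ with $\langle\psi,M_{\ss}\rangle$ through a single ``generic'' behaviour. Call a $k$-tuple of vertices of $H_n$, or of $M_{\ss}$, \emph{good} if it consists of $k$ distinct vertices of the part $A$ and its $2^d$-shadow is the multiset in which every vector of $\{0,1\}^k$ occurs with multiplicity exactly $2^d$. Since the part $A$ is an independent set in both $H_n$ and $M_{\ss}$, for a good $k$-tuple $W$ one has $W_B=\emptyset$, so the adjacency matrix restricted to $W$ is the empty ($k\times 0$) matrix; consequently any two good $k$-tuples---one taken in $H_n$, the other in $M_{\ss}$---agree, trivially, on the restricted adjacency matrix and, by definition, on the $2^d$-shadow.

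Next I would collect what Lemmas~\ref{lm-modeling} and~\ref{lm-graphs} provide. The graph $M_{\ss}(A,B)$ is $\ell$-universal for every $\ell$, and $H_n(A,B)$ is $(k+d)$-universal whenever $n\ge k+d$. Moreover, a random $k$-tuple of vertices of $M_{\ss}$ is good with probability one, whereas the probability that a random $k$-tuple of vertices of $H_n$ is good tends to $1$ as $n\to\infty$. The heart of the argument is then a single application of Lemma~\ref{lm-key} with $q=k$ and $p=d$: given a good $k$-tuple of $H_n$ (for $n\ge k+d$) and a good $k$-tuple of $M_{\ss}$, both graphs are $(k+d)$-universal, the restricted adjacency matrices coincide and the $2^d$-shadows coincide, so the duplicator wins the $(k+d)$-round \EF{} game in which these tuples are played in the first $k$ rounds. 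By the correspondence between \EF{} games and first order formulas recalled in Section~\ref{sec-notation}---a formula with $k$ free variables and quantifier depth $d$ is decided after $k+d$ rounds---it follows that $H_n$ satisfies $\psi$ on that tuple if and only if $M_{\ss}$ satisfies $\psi$ on the other one. Fixing one good $k$-tuple of $M_{\ss}$ (it exists, since the good tuples form a set of full measure) and letting $b\in\{0,1\}$ record whether $\psi$ holds there, we get that $\psi$ holds on every good $k$-tuple of $M_{\ss}$, and on every good $k$-tuple of every $H_n$ with $n\ge k+d$, exactly when $b=1$.

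Finally I would put the pieces together quantitatively. As the good $k$-tuples of $M_{\ss}$ have measure one, $\langle\psi,M_{\ss}\rangle=b$; and for $n\ge k+d$ the good $k$-tuples contribute exactly $b$ times their probability to $\langle\psi,H_n\rangle$, so $|\langle\psi,H_n\rangle-b|$ is at most the probability that a random $k$-tuple of $H_n$ fails to be good, which tends to $0$. Hence $\langle\psi,H_n\rangle\to b=\langle\psi,M_{\ss}\rangle$ for every $\psi$, and the ``in particular'' clause follows because $M_{\ss}$ has already been verified to be a modeling. I expect the only real care to be needed in the bookkeeping: checking that a $k$-tuple lying entirely in $A$ makes the adjacency-matrix hypothesis of Lemma~\ref{lm-key} vacuous, and that the exceptional non-good tuples are controlled uniformly enough that a \emph{single} value $b$ works simultaneously for $M_{\ss}$ and for all large $H_n$; the rest is a direct appeal to Lemmas~\ref{lm-key}--\ref{lm-graphs}.
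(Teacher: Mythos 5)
Your argument is correct and is exactly the intended derivation: the paper supplies no details, saying only that the theorem ``follows directly from Lemmas~\ref{lm-key}--\ref{lm-graphs}'', and your application of Lemma~\ref{lm-key} with $q=k$, $p=d$ to the ``good'' tuples supplied by Lemmas~\ref{lm-modeling} and~\ref{lm-graphs}, followed by the measure/probability bookkeeping, is precisely that derivation. (Only a cosmetic slip: $W_B=\emptyset$ holds because your good tuples lie in $A$ by definition, not because $A$ is an independent set.)
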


Let $\psi_0(x)$ be the first order formula that is true if $x$ is adjacent to the root.
If $s_n=3$ for every $n\in\NN$,
then the set of neighbors of every vertex of $B$ in the modeling $M_{\ss}(A,B)$ has measure $2/3$. 
Hence,
if $M'_{\ss}$ is the modeling obtained from $M_{\ss}$ by rooting at an arbitrary vertex of $B$,
then $\langle\psi_0,M'_{\ss}\rangle=2/3$.
Since no vertex of $H_n$ is adjacent to more than $2^{n-1}n$ vertices (the vertices of $A$
are adjacent to at most $n$ vertices each and each vertex of $B$ is adjacent to $2^{n-1}n$),
it holds that
$$\limsup_{n\to\infty}\langle\psi_0,H'_n\rangle\le\frac{1}{2}$$
for any sequence $(H'_n)_{n\in\NN}$ of rooted graphs obtained from $H_n$.
We conclude that the sequence $(H_n)_{n\in\NN}$, the modeling $M_{\ss}(A,B)$ with $\ss=(3)_{n\in\NN}$ and
rooting $M_{\ss}$ at any vertex of $B$ provide a counterexample to Problem~\ref{prob-1}.

\end{document}